\newtheorem{teorema}{Theorem}
\newtheorem{definicion}{Definition}
\newtheorem{lema}{Lemma}
\newtheorem{proposicion}{Proposition}
\theoremstyle{remark}
\newtheorem{observacion}{Observation}
\newcommand{\denselist}{\topsep 0pt\itemsep 0pt}
\newcommand{\tup}[1]{\langle #1 \rangle}
\newcommand{\col}{\mathbf c}
\newcommand{\set}[1]{\left\{ #1 \right\}}
\newcommand{\setdef}[2]{\set{ #1 \, : \, #2}}
\newcommand{\ident}{\!\sim\!}
\title{A note on Minimal Senders}
\author{Nerio Borges\\ Yachay Tech\\
	School of Mathematical Sciences\\
    and Information Technology\\
		nborges@yachaytech.edu.ec}
\date{\today}
\begin{document}

\maketitle

\begin{abstract}
	In this paper we prove that if a pair of graphs $G,H$ have senders, then they
	 necessarily have connected minimal senders; 
	 we also prove that given two fixed graphs that are either 3-connected or triangles
	 there are minimal $(G,H)$-senders with arbitrarily distant signal edges
	 and $(G,H)$-minimal graphs with arbitrarily large cycles,
	 thus showing there is no upper bound for the diameters of $(G,H)$-minimal graphs.
\end{abstract}

\section{Introduction}

\subsection{The Arrowing Relation, minimal graphs and senders}

Given a (simple, finite) graph $F=\tup{V_F, E_F}$ a {\em $2$-coloring of the edges of $F$}
(or simply an {\em edge coloring} of $F$)
is a function
\begin{equation}
\col: E_F\longrightarrow\set{0,1}
\end{equation}
We informally talk about {\em red} and {\em blue} colors instead of $0$ and $1$.

If $G$ and $H$ are two fixed graphs, an edge coloring of $F$ is {\em $(G,H)$-good}
(or simply a {\em good coloring} if $G,H$ are clear from context)
if there is no isomorphic copy of $G$ completely contained in the preimage of $0$
and no isomorphic copy of $H$ completely contained in the preimage of $1$ i.e.
there is no red $G$ and no blue $H$.

If $F$ has good colorings, we write
\begin{equation}
F\centernot\longrightarrow (G,H)
\end{equation}
and if $F$ has no good coloring, we write
\begin{equation}
F\longrightarrow (G,H)
\end{equation} 
This Arrowing relation has been widely studied \cite{Burr, Burr-MoR, lange2012use, Rodl-Sieggers, schelp2012some}
due to its connection with graph Ramsey theory \cite{diestel2005graph, graham1990ramsey}
which is a very active research area.
The computational complexity of its related decision problems has been also
studied \cite{Burr-MoR, Rutenburg, Schaefer} including some descriptive aspects.

There is also a {\em strong} Arrowing relation denoted by $\rightarrowtail$.
We write
\begin{equation}
F\rightarrowtail (G,H)
\end{equation}
if for every coloring of the edges of $F$ there is either an induced red subgraph isomorphic to $G$
or an induced blue subgraph isomorphic to $H$.
If there is a coloring with no induced red subgraph isomorphic to $G$
and no blue subgraph isomorphic to $H$ we write
\begin{equation}
F\centernot\rightarrowtail (G,H)
\end{equation}

%-------------------------------------------------
% Minimal graphs

A graph $F$ is {\em $(G,H)$-minimal} if
$F\longrightarrow (G,H)$ but $F'\centernot\longrightarrow (G,H)$ 
for every proper subgraph $F'\subset F$.
The set of all $(G,H)$-minimal graphs is denoted by $\mathcal R(G,H)$.
The set of all $(G,H)$-minimal graphs with respect to the
strong Arrowing is denoted by $\mathcal R^*(G,H)$.

% Minimal graphs
%-------------------------------------------------

%------------------------------------
% Senders

A {\em positive $(G,H,e,f)$-sender} is a graph $\mathcal T$ containing two edges $e,f$ such that:
\begin{enumerate}
	\item\label{it:GoodCol} $\mathcal T$ has good colorings,
	\item\label{it:samecolor} $e$ and $f$ have the same color in every good coloring.
	\item\label{it:AltCol} There is a good coloring in which $e$ is red and one in which $e$ is blue.
\end{enumerate} 

A {\em negative $(G,H,e,f)$-sender} is a graph which satisfies conditions \ref{it:GoodCol} to \ref{it:AltCol}
but with the word `different' instead of the expression `the same' in condition \ref{it:samecolor}.

$\Gamma_3$ is the class consisting of all 3-connected graphs and the triangle $K_3$.
An important result related with this class is:

\begin{proposicion}{\cite{Burr}}\label{prop:BurrNegativeSenders}
	If $G,H\in \Gamma_3$ there exist positive and negative $(G,H)$-senders with arbitrarily distant signal edges.
\end{proposicion}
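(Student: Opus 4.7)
The plan is to build senders of arbitrary diameter by iteratively composing a small base sender with itself. First I would take as granted (or separately construct) one positive $(G,H)$-sender $\mathcal T^+_0$ with signal edges $e_0,f_0$ and one negative $(G,H)$-sender $\mathcal T^-_0$ with signal edges $e'_0,f'_0$; producing these base senders for every $G,H\in\Gamma_3$ is the core of Burr's construction and is the technically hardest step. Given any two senders $\mathcal T_1,\mathcal T_2$ with signal edges $(e_1,f_1)$ and $(e_2,f_2)$, I would define their \emph{edge-composition} $\mathcal T_1\ast\mathcal T_2$ by taking disjoint copies and identifying the endpoints of $f_1$ with those of $e_2$; the new signal edges are $e_1$ and $f_2$. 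A short case analysis on the colorings yields the sign rules: positive $\ast$ positive and negative $\ast$ negative are positive senders, while a mixed composition is negative.

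To obtain arbitrary distance, iterate. The $k$-fold self-composition of $\mathcal T^+_0$ is a positive sender in which the outer signal edges are separated by roughly $k\cdot d$, where $d$ is the distance between $e_0$ and $f_0$ in $\mathcal T^+_0$, because every path between the outer edges must cross each intermediate identified edge. Composing one copy of $\mathcal T^-_0$ with $k$ copies of $\mathcal T^+_0$ yields a negative sender with equally large separation, so letting $k\to\infty$ gives arbitrary distance in both signs.

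The main obstacle, and the only place where the hypothesis $G,H\in\Gamma_3$ is really used, is verifying that $\mathcal T_1\ast\mathcal T_2$ really is a sender, i.e.\ that no new monochromatic copies of $G$ or $H$ appear across the identification. Suppose a copy of $G$ in $\mathcal T_1\ast\mathcal T_2$ uses vertices of both $V(\mathcal T_1)\setminus V(f_1)$ and $V(\mathcal T_2)\setminus V(e_2)$. Removing the two shared endpoints of $f_1=e_2$ would then disconnect this copy, contradicting $3$-connectivity of $G$. For $G=K_3$ the argument is a direct check: any triangle with one vertex strictly inside $\mathcal T_1$ and another strictly inside $\mathcal T_2$ would need an edge between those two sides, which does not exist by construction of the amalgam. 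The same reasoning applies to $H$.

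Once this no-new-copies property is established, good colorings of the factors paste along the shared edge, so the composite admits good colorings; the forcing of signal colors propagates along the chain by induction on the length; and both red and blue are realised on the outermost signal edge because they were realised in each factor. Thus the iterated composition produces positive and negative $(G,H)$-senders with signal edges as distant as desired. The part I expect to be most delicate is the base case itself: showing that even a single positive and a single negative sender exist for every $G,H\in\Gamma_3$ typically requires a separate Ramsey-style construction, whereas the chaining step above, modulo the $3$-connectivity argument, is essentially bookkeeping.
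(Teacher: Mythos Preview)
The paper does not actually prove this proposition: it is stated with a citation to Burr--Ne\v{s}et\v{r}il--R\"odl and then used as a black box (in particular, to feed Theorem~\ref{teo:ArbitraryDistanceBetweenSignalEdges}). So there is no ``paper's own proof'' to compare your attempt against.

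That said, your sketch is a fair outline of the argument in the cited reference. The chaining of senders by identifying a signal edge of one copy with a signal edge of the next, together with the observation that a $3$-connected graph (or $K_3$) cannot be embedded across a $2$-vertex cut, is precisely how the distance between the outer signal edges is pushed to infinity there, and your sign calculus for compositions is correct. Two small points worth tightening: first, you need the base sender to have its two signal edges at positive distance (i.e.\ vertex-disjoint), since otherwise iterated composition does not increase the separation; this is part of what the base construction in \cite{Burr} provides. Second, as you yourself note, the real content of the proposition is the existence of \emph{one} positive and \emph{one} negative sender for every $G,H\in\Gamma_3$; you are right to flag that as imported rather than proved, but be aware that in a self-contained write-up this is the step that carries essentially all the weight.
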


From now on $G$ and $H$ are two fixed graphs in $\Gamma_3$ unless otherwise explicitly stated.

\subsection{Contributions and organization of this paper}

There are three contributions presented in this paper. 
We prove that:
\begin{enumerate}\denselist
\item If $G,H$ have senders, then necessarily they have connected minimal senders (Theorem \ref{teo:MinimalSendersAreConnected}).
\item For any pair of fixed graphs $G,H$ in $\Gamma_3$ there are minimal senders with arbitrarily distant signal edges (Theorem \ref{teo:ArbitraryDistanceBetweenSignalEdges}).
\item For any pair of fixed graphs $G,H$ in $\Gamma_3$ there are elements of $\mathcal R(G,H)$ with arbitrarily large cycles (Theorem \ref{Teo:CicloNoAcotado}).
\end{enumerate}

Theorems \ref{teo:MinimalSendersAreConnected} and \ref{teo:ArbitraryDistanceBetweenSignalEdges}
are proved in Section \ref{sec:MinimalSenders}, Theorem \ref{Teo:CicloNoAcotado} is proved in Section \ref{MinimalSendersWithArbitrarilyLargeCycles} and Section \ref{sec:FurtherResearch} is dedicated
to present some comments on the results and some questions that are left open in this work.

To prove Theorem \ref{teo:ArbitraryDistanceBetweenSignalEdges} we use Proposition \ref{prop:BurrNegativeSenders}
and the fact that the usual order on the natural numbers is a well order. To prove Theorem \ref{Teo:CicloNoAcotado}
we use Theorem \ref{teo:ArbitraryDistanceBetweenSignalEdges} and an identification operation between
two edges of the same graph (see Definition \ref{de:ident}), an idea already used in \cite{Borges:boletin}.

%------------------------------------------------------------
%  Section: Basic definitions
%------------------------------------------------------------

\section{Minimal Senders}\label{sec:MinimalSenders}

%------------------------------------------------------------
% Arrowing
%------------------------------------------------------------

A sender is {\em minimal} if none of its proper subgraphs is a sender with the same signal edges $e,f$.

Every pair of graphs $(G,H)$ from $\Gamma_2'$ has minimal senders.
\begin{lema}\label{le:minimalidad}
If $G$ and $H$
have a negative (resp. positive) $(G,H,e,f)$-sender,
then they have a minimal negative (positive) $(G,H,e,f)$-sender.
\end{lema}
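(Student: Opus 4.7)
The plan is a pure finite-descent argument: minimality is witnessed by any element that attains the minimum of $|E(\cdot)|$ on the (finite) collection of sender subgraphs. Fix a negative (resp. positive) $(G,H,e,f)$-sender $\mathcal T$ and consider
\[
\mathcal S \;=\; \setdef{\mathcal T' \subseteq \mathcal T}{e,f \in E(\mathcal T')\text{ and }\mathcal T'\text{ is a negative (resp. positive) }(G,H,e,f)\text{-sender}}.
\]
This collection is nonempty, since $\mathcal T \in \mathcal S$, and finite, since $\mathcal T$ (being a finite graph) has only finitely many subgraphs.

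Since $|E(\cdot)|$ takes values in $\mathbb N$ and $\mathcal S$ is finite and nonempty, the well-ordering of $\mathbb N$ yields some $\mathcal T^\star \in \mathcal S$ with $|E(\mathcal T^\star)|$ minimal. By construction $\mathcal T^\star$ is a $(G,H,e,f)$-sender of the required sign. If there existed a proper subgraph $\mathcal T'' \subsetneq \mathcal T^\star$ which were still a $(G,H,e,f)$-sender with the same signal edges $e,f$, then $\mathcal T''$ would belong to $\mathcal S$ and satisfy $|E(\mathcal T'')| < |E(\mathcal T^\star)|$, contradicting the choice of $\mathcal T^\star$. Therefore $\mathcal T^\star$ is a minimal negative (resp. positive) $(G,H,e,f)$-sender.

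There is no real obstacle to overcome here: the statement is essentially a well-foundedness remark about the subgraph poset with fixed distinguished edges. The only point that deserves care is that minimality of a sender is defined \emph{with respect to the specified signal edges} $e,f$, so we must restrict to subgraphs containing both of them; this is enforced by the clause $e,f \in E(\mathcal T')$ in the definition of $\mathcal S$. The positive and negative cases are handled by the same argument verbatim, since the distinction between them plays no role in the descent.
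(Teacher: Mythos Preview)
Your proof is correct and takes essentially the same approach as the paper: both arguments pick a sender subgraph of $F$ (resp.\ $\mathcal T$) with the minimum number of edges and observe that no proper subgraph can then be a $(G,H,e,f)$-sender. The only cosmetic difference is that the paper phrases the minimization over the set of edge-counts in $\mathbb N$, whereas you minimize directly over the finite family $\mathcal S$ of sender subgraphs; the content is identical.
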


\begin{proof}
	Let $F$ be a negative (positive) $(G,H)$-sender with
	signal edges $e,f$.
	
By Hypothesis the set
\[
S=\setdef{k\in\mathbb N}{F'\subseteq F \text{ is a negative (positive) $(G,H,e,f)$-sender with $k$ edges}}
\]
is non empty.
Let $m$ be the least element of $S$.
There is a graph $F'\subseteq F$ with $m$ edges which is a negative (positive) sender with signal edges $e$ and $f$.
Any proper subgraph $F''$ of $F'$ has less than $m$ edges and it can not be a negative (positive) sender with
the same signal edges, because $|E_{F''}|$ does not belong to $S$.
Therefore $F'$ is minimal.

\end{proof}

\begin{observacion}\label{ob:ColModulares}
Let $F=\tup{V_F,E_F}$ be a graph
with connected components $F_j=\tup{V_j,E_j}$ for $0\leq j<k$
and $G,H$ two connected graphs.

A coloring
$\col:E_F\longrightarrow \set{0,1}$ is $(G,H)$-good
iff
there is a $(G,H)$-good coloring $\col_j:E_j\longrightarrow \set{0,1}$
for each $0\leq j<k$ such that $\col_j=\col\upharpoonright_{E_j}$. 
\end{observacion}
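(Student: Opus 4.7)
The plan is to use the fundamental property that a connected subgraph of $F$ must lie entirely within a single connected component of $F$. Since $G$ and $H$ are both connected by hypothesis, any isomorphic copy of $G$ or $H$ appearing inside $F$ is automatically confined to some $F_j$, and conversely any monochromatic copy inside some $F_j$ is a monochromatic copy inside $F$. The biconditional should then reduce to checking that monochromatic copies of $G$ (red) and $H$ (blue) inside $F$ correspond bijectively to monochromatic copies inside the components.

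First I would prove the forward direction. Assume $\mathbf c$ is $(G,H)$-good and fix $j<k$; define $\mathbf c_j = \mathbf c \upharpoonright_{E_j}$. If $\mathbf c_j$ had a red copy of $G$ (resp.\ a blue copy of $H$) in $F_j$, the same edge set would provide a red $G$ (resp.\ blue $H$) under $\mathbf c$ in $F$, contradicting goodness. So every $\mathbf c_j$ is good.

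Next I would prove the converse. Suppose each $\mathbf c_j$ is $(G,H)$-good and define $\mathbf c$ by stipulating $\mathbf c(e)=\mathbf c_j(e)$ whenever $e\in E_j$; this is well defined because the $E_j$ partition $E_F$. If $\mathbf c$ were not good, then without loss of generality there is a red copy of $G$ in $F$ (the case of a blue $H$ is symmetric). Since $G$ is connected, its vertex set is contained in a single connected component $V_j$, and so all its edges lie in $E_j$. Thus $\mathbf c_j$ exhibits a red $G$ inside $F_j$, contradicting the assumption that $\mathbf c_j$ is good.

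The only non-routine step is the topological fact that a connected subgraph of $F$ must be contained in one connected component, which is immediate from the definition of connected components. There is no real obstacle here; the observation is essentially a restatement of the fact that the arrowing property for connected $G,H$ decomposes across connected components.
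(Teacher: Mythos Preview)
Your argument is correct. The paper states this as an Observation without supplying a proof, treating it as self-evident; your write-up is precisely the routine justification one would give, hinging on the fact that a connected graph embedded in $F$ must lie in a single connected component.
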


\begin{teorema}\label{teo:MinimalSendersAreConnected}
If $G,H$ have negative or positive
senders, then every minimal $(G,H,e,f)$-sender,  either negative
or positive, is connected.
\end{teorema}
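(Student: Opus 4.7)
The plan is to argue by contradiction. Suppose $F$ is a minimal $(G,H,e,f)$-sender (positive or negative) which is disconnected, and let $F_0,\ldots,F_{k-1}$ with $k\geq 2$ be its connected components. Observation \ref{ob:ColModulares} will be the main workhorse, and its connectedness hypothesis on $G$ and $H$ is automatically satisfied because the standing convention places $G,H\in\Gamma_3$, and every graph in $\Gamma_3$ (a 3-connected graph or $K_3$) is connected. I would then split into two cases according to where the signal edges $e,f$ sit.

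In the first case, $e$ and $f$ lie in the same component, say $F_0$. I would show that $F_0$ is itself a $(G,H,e,f)$-sender of the same sign as $F$, contradicting minimality since $F_0\subsetneq F$. The three sender conditions for $F_0$ follow directly from the observation: a good coloring of $F_0$ and the two alternating-color colorings required by condition~3 are obtained by restricting good colorings of $F$ to $F_0$. For condition~2, any hypothetical good coloring of $F_0$ in which $e$ and $f$ are miscolored with respect to $F$'s sign can be glued, via the observation, to good restrictions on the remaining components (themselves obtained from any good coloring of $F$); this would yield a good coloring of $F$ violating the sender property of $F$, and is therefore ruled out.

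In the second case, $e\in F_i$ and $f\in F_j$ with $i\neq j$. Here I would contradict the sender property of $F$ directly. Pick good colorings $\col_R,\col_B$ of $F$ with $e$ red and blue respectively, and form a new coloring of $F$ that agrees with $\col_R$ on $F_i$, with $\col_B$ on $F_j$, and with $\col_R$ on every other component. By the observation this is again a good coloring of $F$. If $F$ is positive then $f$ is red under $\col_R$ and blue under $\col_B$, so the mixed coloring has $e$ red and $f$ blue; if $F$ is negative then $f$ is blue under $\col_R$ and red under $\col_B$, so the mixed coloring has both $e$ and $f$ red. In either case condition~2 of the sender definition fails, giving the contradiction.

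The argument is really a double application of Observation \ref{ob:ColModulares}: restrictions of good colorings go from $F$ down to components, and independent good colorings on components glue back to good colorings of $F$. I do not anticipate a serious obstacle; the only place where one must be a little careful is Case~2, where the forced colors of $f$ under $\col_R$ and $\col_B$ need to be tracked in both the positive and negative subcases to confirm that the mixed coloring genuinely falsifies the same/different-color requirement.
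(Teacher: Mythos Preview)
Your proposal is correct and follows essentially the same approach as the paper: argue by contradiction via Observation~\ref{ob:ColModulares}, splitting into the cases where $e,f$ lie in different components (mix two good colorings to violate condition~2) and where they lie in the same component (that component is already a sender, contradicting minimality). The only cosmetic differences are that the paper treats the different-component case first and handles only the negative sign explicitly (declaring the positive case analogous), whereas you reverse the case order and track both signs in parallel; you are also a bit more explicit than the paper in verifying all three sender conditions for the single component in the same-component case.
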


\begin{proof}

Suppose the pair $(G,H)$ have negative senders.
Then there is a minimal negative
$(G,H,e,f)$-sender $F=\tup{V_F,E_F}$ by Lemma \ref{le:minimalidad}.

If $F$ is not connected, then we can assume that it has connected components
$F_0,F_1,\ldots, F_{k-1}$
with $F_i=\tup{V_i,E_i}$ for each $0\leq i<k$. 

Suppose $e\in E_j$ and $f\in E_k$ with $j\neq k$. 
Let $\col:E_F\longrightarrow \set{0,1}$ 
and $\col':E_F\longrightarrow \set{0,1}$ be good colorings such that
$\col(e)=\col'(f)$.

By Observation \ref{ob:ColModulares}, $\col\upharpoonright_{E_j}$ and $\col'\upharpoonright_{E_k}$  
are good colorings of components $F_j$ and $F_k$ respectively.
Again by Observation \ref{ob:ColModulares} the coloring defined by
\[
\tilde \col(x)=
		\left\{
			\begin{matrix}
			\col(x) & \text{if $x\in E_i\;\;\; i\not=k$}\\
			\col'(x) & \text{if $x\in E_k$ }
			\end{matrix}
		\right.
\]
is good, and $\tilde \col(e)=\tilde \col(f)$,
which contradicts the fact that $F$ is a negative sender.
Therefore, signal edges $e$ and $f$ must belong to the same connected component $F_j$ of $F$. 

Now if $\col:E_F\longrightarrow \set{0,1}$ is a good coloring its restriction
$\col\upharpoonright_{E_j}:E_j\longrightarrow \set{0,1}$ is a good coloring of $F_j$. 
Then we can conclude that $F_j$ is a connected $(G,H,e,f)$-negative sender,
but this shows that $F$ can not be minimal.

Therefore any minimal  $(G,H,e,f)$-negative sender is necessarily connected.

The argumentation for positive senders is analogous.
\end{proof}

\begin{teorema}\label{teo:ArbitraryDistanceBetweenSignalEdges}
	For every natural number $n$ there is a minimal negative (positive) $(G,H)$-sender 
	such that the distance between its signal edges is at least $n$. 
\end{teorema}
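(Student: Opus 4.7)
The plan is to combine Proposition \ref{prop:BurrNegativeSenders} with Lemma \ref{le:minimalidad}, plus the elementary observation that distances in a subgraph can only grow.

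Fix $n\in\mathbb N$. First I would invoke Proposition \ref{prop:BurrNegativeSenders} to produce a (not necessarily minimal) negative, respectively positive, $(G,H,e,f)$-sender $F$ in which the signal edges $e,f$ lie at graph distance at least $n$; this is permitted since $G,H\in\Gamma_3$. Next I would apply Lemma \ref{le:minimalidad} to this $F$ in order to extract a minimal negative (positive) $(G,H,e,f)$-sender $F'\subseteq F$ with the \emph{same} distinguished edges $e$ and $f$. This is precisely the step in which the well-ordering of $\mathbb N$ enters, through the proof of Lemma \ref{le:minimalidad}: one takes a subsender of $F$ with the smallest possible number of edges among all subsenders of $F$ sharing the signal edges $e,f$, and that subsender is automatically minimal.

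It then remains to check that the distance between $e$ and $f$ is at least $n$ inside $F'$. Any path in $F'$ joining an endpoint of $e$ to an endpoint of $f$ is also such a path in $F$, hence
\[
d_{F'}(e,f)\;\geq\; d_F(e,f)\;\geq\; n,
\]
so $F'$ is the required minimal sender.

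The argument presents no real obstacle: Proposition \ref{prop:BurrNegativeSenders} and Lemma \ref{le:minimalidad} do the heavy lifting, and the remaining observation, that taking a subgraph cannot shorten the distance between two given edges, is immediate from the definition of graph distance. The positive case is identical word for word, with \textbf{positive} substituted for \textbf{negative} throughout.
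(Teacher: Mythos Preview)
Your argument is correct and essentially identical to the paper's: invoke Proposition~\ref{prop:BurrNegativeSenders} to obtain a sender $F$ with $d_F(e,f)\ge n$, pass to a minimal subsender $F'\subseteq F$ via the well-ordering argument of Lemma~\ref{le:minimalidad}, and note that distances cannot decrease in a subgraph. The only cosmetic difference is that the paper re-displays the set $S$ and the minimum-edge argument inline rather than simply citing Lemma~\ref{le:minimalidad}; you correctly observe that one needs the \emph{proof} of that lemma (the minimal sender is obtained as a subgraph of $F$), not merely its statement.
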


\begin{proof}
	Suppose $F$ is a negative $(G,H)$-sender with signal edges $e$ and $f$ and such that
	the distance between them is $n$. Such a sender exists by
	Proposition \ref{prop:BurrNegativeSenders}.
	Hence the set
	\[
	S=\setdef{k\in\mathbb N}{F'\subseteq F \text{ is a negative (positive) $(G,H,e,f)$-sender with $k$ edges}}
	\]
	is non empty.
	Let $m$ be the least element of $S$.
	There is a negative (resp. positive) sender $F'$ with $m$ edges and signal edges $e,f$ separated by a distance $n$.
	By an argument similar to the one given in the proof of Lemma \ref{le:minimalidad} this $F'$
	is a minimal negative (positive) sender. The distance between $e$ and $f$ in $F'$ must be at least
	the distance between them in $F$ because $F'$ is a subgraph of $F$.

\end{proof}

% Senders
%-------------------------------------

%--------------------------------------------------------------
%	Minimal graphs with arbitrarily large cycles
%--------------------------------------------------------------

\section{Minimal Ramsey graphs with arbitrarily large cycles}\label{MinimalSendersWithArbitrarilyLargeCycles}

\begin{definicion}\label{de:ident}
	Let $G=(V_G,E_G)$ be a graph and let $x=(a,b), x'=(c,d)$ be two edges of $G$.
	
	We define a new graph $G'=G[x\ident x']$ as follows:
	\[
	V_{G'}=V_G\setminus\set{a,b}
	\]
	
	\begin{align*}
		E_{G'}	 =& \setdef{(u,v)\in V_{G'}^2}{(u,v)\in E_G}
				\cup\setdef{(u,c)\in V_{G'}^2}{(u,a)\in E_G}\\
			&	\cup\setdef{(u,d)\in V_{G'}^2}{(u,b)\in E_G}
	\end{align*}
\end{definicion}
We say that $F$ is obtained from $G$ by {\em identification} of $x$ and $x'$.
Notice that $x$ is replaced by $x'$ i.e. $x$ is not an edge of $F'$
but $x'$ is.

\begin{lema}\label{le:NoGoodColoring}
	If $F$ is a negative $(G,H,e,f)$-sender. 
	Then $F[e\ident f]$ have no good colorings.
\end{lema}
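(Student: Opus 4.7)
The plan is to argue by contradiction. Suppose $F[e\ident f]$ admits a good coloring $\col'$. I will build a coloring $\col$ of $F$ from $\col'$ in which the two signal edges $e$ and $f$ receive the same color, and then use the hypothesis that $F$ is a negative sender to reach the contradiction.

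Let $\phi: V_F \to V_{F[e\ident f]}$ be the natural map sending $a \mapsto c$, $b \mapsto d$, and fixing every other vertex. By the very definition of $F[e\ident f]$, each edge $x$ of $F$ corresponds via $\phi$ to an edge of $F[e\ident f]$, with the possible degeneracy that edges inside $\{a,c\}$ or $\{b,d\}$ become loops. Define $\col(x) := \col'(\phi(x))$ whenever $\phi(x)$ is a genuine edge, and assign an arbitrary color to the remaining ones. Since $\phi(e)=(c,d)=f=\phi(f)$, this yields $\col(e)=\col'(f)=\col(f)$, so the two signal edges end up with a common color.

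Because $F$ is a negative $(G,H,e,f)$-sender, every good coloring of $F$ must place $e$ and $f$ in different color classes; hence $\col$ is not good. So under $\col$ there is either a red copy $G_0$ of $G$ or a blue copy $H_0$ of $H$ inside $F$. I focus on the red case, the blue one being symmetric. The aim is then to show that the $\phi$-image of $G_0$ gives a red copy of $G$ in $F[e\ident f]$ under $\col'$, contradicting the assumption that $\col'$ was good. When $V(G_0)\cap\{a,b\}=\emptyset$, the subgraph $G_0$ already sits inside $F[e\ident f]$ and keeps its red coloring. When $V(G_0)$ contains $a$ (or $b$) but not $c$ (respectively $d$), the definition of $F[e\ident f]$ guarantees that replacing $a$ by $c$ (respectively $b$ by $d$) throughout $G_0$ produces a subgraph of $F[e\ident f]$ isomorphic to $G$ whose edges are red under $\col'$.

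The delicate step, which I expect to be the main obstacle, is the degenerate configuration in which $V(G_0)$ simultaneously meets $\{a,b\}$ and $\{c,d\}$. There $\phi$ identifies two distinct vertices of $G_0$, so $\phi(G_0)$ collapses and need not remain isomorphic to $G$, and a red edge of $G_0$ may even get discarded as a loop. Handling this scenario cleanly is where the bulk of the effort would go, presumably by exploiting that the identification $F[e\ident f]$ is only considered when the vertex sets $\{a,b\}$ and $\{c,d\}$ are disjoint, together with the structural assumption $G,H\in\Gamma_3$ (either $3$-connected or a triangle), in order either to rule out such a degenerate incidence or to reroute $G_0$ into an honest red copy of $G$ supported near the image of the identified vertices.
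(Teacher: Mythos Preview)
Your plan coincides with the paper's proof: assume a good coloring $\col'$ of $F'=F[e\ident f]$, lift it to a coloring $\col$ of $F$ with $\col(e)=\col(f)$, invoke the negative-sender property to find a monochromatic copy of $G$ or $H$ in $F$, and push that copy into $F'$ to contradict goodness of $\col'$. The paper carries out exactly these steps, and at the last one simply asserts that ``any copy of $G$ or $H$ contained in $F$ is contained also in $F'$''; it does not discuss the degenerate configuration you isolate, where the monochromatic copy meets both $\{a,b\}$ and $\{c,d\}$ and may collapse under $\phi$.

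So you have been more scrupulous than the paper, not less. The lemma as stated, with no distance hypothesis on $e$ and $f$, is indeed delicate at exactly that point. In the paper the lemma is only ever invoked inside Theorem~\ref{Teo:CicloNoAcotado}, where $d(e,f)\ge n$ with $n$ arbitrary; since $G,H\in\Gamma_3$ are connected graphs of fixed size, once $n$ exceeds their diameters no copy of $G$ or $H$ in $F$ can touch both signal edges, and your degeneracy cannot occur. Adding that distance assumption to the lemma (or recording it in the proof) closes the gap cleanly, and you do not need $3$-connectivity beyond the fact that $G$ and $H$ are connected.
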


\begin{proof}
	Suppose $F'=F[e\ident f]$ has a good coloring $\col'$.
	Now consider the coloring $\col$ on the eges of $F$:
	\[
	\col(x)=
	\begin{cases}
		\col'(x)	&	\quad\text{if $x\neq e$ and $x\neq f$}\\
		\col'(f)	& \quad\text{if $x= e$ or $x= f$}
	\end{cases}
	\]
	The coloring $\col$ can not be good since $\col(e)=\col(f)$ and $F$ is a negative sender.
	Thus $F$ contains either a red copy of $G$ or a blue copy of $H$
	but then $\col'$ is not a good coloring for $F'$
	because any copy of $G$ or $H$ contained in $F$ is contained also in $F'$
	and $\col(x)=\col'(x)$ for every $x\in E_{F'}\setminus f$.
\end{proof}

\begin{teorema}\label{Teo:CicloNoAcotado}
	For any natural number $n$ there is a graph in $\mathcal R(G,H)$ 
	containing a cycle of length at least $n$.
\end{teorema}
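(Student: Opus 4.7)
The plan is to show that $F^{*}=F[e\ident f]$ is itself the desired graph, for a suitable choice of minimal negative sender $F$. Given $n$, I would apply Theorem~\ref{teo:ArbitraryDistanceBetweenSignalEdges} to select a minimal negative $(G,H,e,f)$-sender $F$ with $e=\{a,b\}$ and $f=\{c,d\}$, whose signal-edge distance in $F$ is at least $n$ and is also large relative to $|V(G)|$ and $|V(H)|$. Set $F^{*}=F[e\ident f]$; by Lemma~\ref{le:NoGoodColoring} we have $F^{*}\longrightarrow(G,H)$.

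First I would exhibit a long cycle in $F^{*}$. Since $F$ is connected by Theorem~\ref{teo:MinimalSendersAreConnected}, some shortest path $P$ realizing the signal-edge distance joins, say, $a$ to $c$ through internal vertices lying outside $\{a,b,c,d\}$. After the identification $a\sim c$, $P$ closes up into a simple cycle $C$ in $F^{*}$ through the merged vertex $\alpha=\{a,c\}$, disjoint from $f$, and of length at least $n$.

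The heart of the proof is then to verify that $F^{*}\in\mathcal R(G,H)$, i.e., that $F^{*}-y\centernot\longrightarrow(G,H)$ for every edge $y$ of $F^{*}$. For $y\neq f$, the edge $y$ is the image of a unique $x\in E_F\setminus\{e,f\}$ (the distance hypothesis precludes any vertex being simultaneously adjacent to both $a$ and $c$, or to both $b$ and $d$), and minimality of $F$ forces $F-x$ not to be a negative sender. Restricting any good coloring of $F$ to $F-x$ shows that $F-x$ has good colorings and in fact both a red-$e$ and a blue-$e$ good coloring, so the only way for $F-x$ to fail to be a negative sender is to admit a good coloring with $\col(e)=\col(f)$. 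Pushing such a coloring forward through the identification produces a candidate good coloring of $F^{*}-y$; the case $y=f$ is analogous, starting from a good coloring of $F$ itself. The main obstacle, and where the $\Gamma_3$ structure enters, is to check that these pushforwards really are good: every monochromatic copy of $G$ or $H$ in $F^{*}-y$ must lift to such a copy in $F-x$ (respectively in $F$). Using that $G$ and $H$ are $3$-connected or triangles together with the large-distance hypothesis, all neighbours of $\alpha$ or $\beta=\{b,d\}$ inside a monochromatic copy must come from a single side of the identification (otherwise a path through the copy of length at most $\mathrm{diam}(G-\alpha)$ or $\mathrm{diam}(H-\alpha)$ would give a too-short path in $F$ between a neighbour of $a$ and a neighbour of $c$), so no phantom copies arise and the lift is well defined. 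Once this is established, $F^{*}\in\mathcal R(G,H)$ and the already-exhibited cycle $C$ proves the theorem.
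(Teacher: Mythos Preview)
Your proof follows essentially the same route as the paper's: start from a minimal negative $(G,H,e,f)$-sender $F$ with large signal-edge distance (Theorem~\ref{teo:ArbitraryDistanceBetweenSignalEdges}), form $F^*=F[e\ident f]$, extract a long cycle from a shortest path joining the signal edges, invoke Lemma~\ref{le:NoGoodColoring} for $F^*\longrightarrow(G,H)$, and for each edge $y$ use minimality of $F$ to find a good coloring of $F-x$ with $\col(e)=\col(f)$ and push it forward to $F^*-y$. You are in fact more explicit than the paper on the one delicate point: the paper simply asserts that the induced coloring of $F'\setminus x$ is good, whereas you correctly flag the need to rule out ``phantom'' monochromatic copies created by the identification and sketch how the $\Gamma_3$ hypothesis together with a signal-edge distance large relative to $|V(G)|,|V(H)|$ forces every such copy to lift back to $F$.
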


\begin{proof}
	Let $F$ be a minimal negative $(G,H)$-sender with signal edges $e=(a,b)$ and $f=(c,d)$
	such that $d(e,f)\geq n$.
	Such a negative sender exists because of Theorem \ref{teo:ArbitraryDistanceBetweenSignalEdges}.
	Hence there are vertices $u\in e, v\in f$
	such that $m=d(u,v)\geq n$.
	 
	Suppose $P$ is a path between $u$ and $v$ with length $m$.
	Thus
	\(
	P=v_0,v_1,\ldots, v_m
	\)
	with $u=v_0$ and $v=v_m$.
	This path exists because $F$ is connected by Theorem \ref{teo:MinimalSendersAreConnected}.
	
	If $u$ and $v$ are identified in $F'=F[e\ident f]$ 
	it is immediate that the path $P$ corresponds to a cycle
	of lenght $m$.
	
	If $u$ and $v$ are not identified in $F'$ then either $u=a$ and $v=d$
	or either $u=b$ and $v=c$. Suppose $u=a$ and $v=d$ (the other case is analogous).
	Then $P$ corresponds to the path $P'$ in $F'$:
	\[
	P'=u_0,u_1,\ldots, u_m
	\]
	where $u_0=c$, $u_m=d$ and $u_i=v_i$ if $i\neq 0$ and $i\neq m$.
	As $c$ and $d$ form the edge $f$, the path
	\[
	u_0,u_1,\ldots, u_m,c
	\]
	is a cycle of length $m+1>n$.
	
	We have proved so far that $F'$ contains a cycle of length at least $n$.
	It remains to prove that $F'\in\mathcal R(G,H)$.
	First, note that $F$ is a negative sender, thus
	$F'$ have no good colorings by Lemma \ref{le:NoGoodColoring}.
	Since $F$ is a negative sender, it has good colorings.
	If $x\notin\set{e,f}$ is an edge of $F$, then $F\setminus x$ still has good colorings
	but $F\setminus x$ is not a negative sender because of the minimality of $F$
	hence there is a good coloring $\col$ of $F\setminus x$ with $\col(e)=\col(f)$.
	This induces a good coloring $\col'$ of $F'\setminus x$ given by $\col'(y)=\col(y)$
	for every edge $y$ of $F'$.
	
	If $x=e$ or $x=f$ then $F'\setminus x=F'\setminus f$.
	As $f$ was the ``conflictive'' edge, we have that $F'\setminus f$ has good colorings.
	
	Hence $F'\in\mathcal R(G,H)$ as we wanted to prove.
\end{proof}

\section{Conclusions, observations and Further Research}\label{sec:FurtherResearch}

Theorem \ref{teo:MinimalSendersAreConnected} says that for any given pair of graphs $G$ and $H$,
if they have negative (positive) senders, then any minimal negative (positive) $(G,H)$-sender 
must be connected.

We extend Proposition \ref{prop:BurrNegativeSenders} 
to minimal senders in Teorem \ref{teo:ArbitraryDistanceBetweenSignalEdges}.
On the other hand Theorem \ref{Teo:CicloNoAcotado} says we can have
graphs in $\mathcal R(G,H)$ with arbitrarily large diameters.

The class $\Gamma_2'$ contains all graphs that do not get disconnected
by the removal of an edge. 
Proposition \ref{prop:BurrNegativeSenders} is still valid when $G$ and $H$ belong to $\Gamma_2'$ 
and the Arrowing relation is replaced by the strong Arrowing relation \cite{Burr}.
Hence our results are still valid for the strong Arrowing if $G$ and $H$ 
belong to $\Gamma_2'$.

%------------------------------------------------------
%	Bibliografía
%------------------------------------------------------

\section*{Bibliography}

\end{document}